\documentclass[12pt,twoside]{amsart}
\usepackage{geometry}
\geometry{a4paper,top=3cm,bottom=3cm,left=2.5cm,right=2.5cm}
\usepackage{amssymb,amsmath,amsthm, amscd, enumerate, mathrsfs}
\usepackage{graphicx, hhline}
\usepackage[all]{xy}
\usepackage[dvipdfmx]{hyperref}
\usepackage[shortlabels]{enumitem}
\setlist[enumerate]{leftmargin=56pt,labelsep=
8pt,itemsep=4pt,label=\upshape{(\thethm.\arabic*)}}
\usepackage{mathtools}
\mathtoolsset{showonlyrefs}
\usepackage{xcolor}

\title{Remarks on log pluricanonical representations}
\author{Osamu Fujino and Jinsong Xu}
\date{2025/1/16, version 0.06}
\subjclass[2020]{Primary 14E30; Secondary 14E07}
\keywords{log canonical pairs, minimal models, log pluricanonical representations, 
proper birational maps, affine varieties}
\address{Department of 
Mathematics, Graduate School of Science, 
Kyoto University, Kyoto 606-8502, Japan}
\email{fujino@math.kyoto-u.ac.jp}
\address{Department of Mathematical Sciences\\ 
Xi'an Jiaotong-Liverpool University\\ 
No.111 Ren'ai Road, SIP, Suzhou, Jiangsu Province, China}
\email{Jinsong.Xu@xjtlu.edu.cn}
\DeclareMathOperator{\Bir}{Bir}
\DeclareMathOperator{\Aut}{Aut}
\DeclareMathOperator{\PBir}{PBir}


\newtheorem{thm}{Theorem}[section]

\theoremstyle{definition}
\newtheorem{defn}[thm]{Definition}
\newtheorem{rem}[thm]{Remark}

\newtheorem*{ack}{Acknowledgments}

\makeatletter

\@addtoreset{equation}{section}
\makeatother
\begin{document}

\begin{abstract}
We show the finiteness of log pluricanonical representations under the assumption of the existence of a good minimal model.
\end{abstract}

\maketitle 

\tableofcontents 

\section{Introduction}

This short paper is a supplement to \cite{fujino-gongyo} (see also \cite{hacon-xu}). 
One of the main purposes of this paper is to establish: 

\begin{thm}\label{thm1.1}
Let $(X, \Delta)$ be a projective log canonical pair 
such that $K_X+\Delta$ is 
$\mathbb Q$-Cartier. 
Assume that $(X, \Delta)$ has a good minimal model. 
Then there exists a positive integer $k$ such that 
the image of 
\begin{equation*}
\rho_{km}\colon \Bir(X, \Delta)\to \Aut _{\mathbb C} 
H^0(X, \mathcal O_X(km(K_X+\Delta)))
\end{equation*} 
is a finite group for every positive integer $m$, 
where 
\begin{equation*}
\Bir(X, \Delta):=\{\sigma\mid \text{$\sigma\colon (X, \Delta)
\dashrightarrow (X, \Delta)$ is 
$B$-birational}\}. 
\end{equation*}
\end{thm}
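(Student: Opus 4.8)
The plan is to reduce the assertion to the case in which $K_X+\Delta$ is itself semi-ample, where the finiteness of the log pluricanonical representation is already available from \cite{fujino-gongyo}. Fix a good minimal model $\phi\colon (X,\Delta)\dashrightarrow (X',\Delta')$; thus $(X',\Delta')$ is a projective log canonical pair with $\Delta'=\phi_*\Delta$ and with $L:=K_{X'}+\Delta'$ semi-ample. I would choose once and for all a positive integer $k$, divisible enough that $k(K_X+\Delta)$ and $kL$ are Cartier and that the semi-ample case of \cite{fujino-gongyo} applies to $(X',\Delta')$ for this $k$; this will be the integer in the statement.

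The first step is to identify the spaces of sections. Let $p\colon W\to X$ and $a\colon W\to X'$ be morphisms from a common log resolution realizing $\phi=a\circ p^{-1}$. Since $(X',\Delta')$ is a minimal model of $(X,\Delta)$, comparing discrepancies gives $p^*(K_X+\Delta)=a^*L+E_a$ with $E_a\ge 0$ and $a$-exceptional. As $E_a$ is effective and $a$-exceptional, pulling back to $W$ and discarding $E_a$ yields, for every $m$, canonical isomorphisms $H^0(X,\mathcal O_X(km(K_X+\Delta)))\cong H^0(W,\mathcal O_W(km\,p^*(K_X+\Delta)))\cong H^0(W,\mathcal O_W(km\,a^*L))\cong H^0(X',\mathcal O_{X'}(kmL))$.

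The key step, and the main obstacle, is to show that conjugation by $\phi$ carries $\Bir(X,\Delta)$ into $\Bir(X',\Delta')$ compatibly with these isomorphisms. Given $\sigma\in\Bir(X,\Delta)$, after enlarging $W$ I may assume $\sigma$ is also resolved on $W$, say by $p$ and a second morphism $q\colon W\to X$ with $p^*(K_X+\Delta)=q^*(K_X+\Delta)$. Set $\tau:=\phi\circ\sigma\circ\phi^{-1}$ and $b:=\phi\circ q\colon W\to X'$, so that $\tau=b\circ a^{-1}$ and $q^*(K_X+\Delta)=b^*L+E_b$ with $E_b\ge0$ and $b$-exceptional. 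Substituting $p^*(K_X+\Delta)=q^*(K_X+\Delta)$ gives $a^*L+E_a=b^*L+E_b$, i.e. $a^*L-b^*L=E_b-E_a$. Now I would apply the negativity lemma twice. Since $L$ is nef, $a^*L-b^*L\equiv_{b}a^*L$ is $b$-nef and $b_*(E_a-E_b)=b_*E_a\ge0$, whence $E_a-E_b\ge0$; symmetrically $b^*L-a^*L\equiv_{a}b^*L$ is $a$-nef and $a_*(E_b-E_a)=a_*E_b\ge0$, whence $E_b-E_a\ge0$. Therefore $E_a=E_b$ and $a^*L=b^*L$, which is exactly the assertion that $\tau$ is $B$-birational for $(X',\Delta')$. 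Because both $\rho^X_{km}(\sigma)$ and $\rho^{X'}_{km}(\tau)$ are induced by pullback of rational pluricanonical forms, they correspond to one another under the isomorphism of section spaces above.

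Consequently $\sigma\mapsto\phi\circ\sigma\circ\phi^{-1}$ is an injective homomorphism $\Bir(X,\Delta)\to\Bir(X',\Delta')$ intertwining $\rho^X_{km}$ and $\rho^{X'}_{km}$, so the image of $\rho^X_{km}$ is isomorphic to a subgroup of the image of $\rho^{X'}_{km}$. Since $L=K_{X'}+\Delta'$ is semi-ample, \cite{fujino-gongyo} gives that the image of $\rho^{X'}_{km}$ is finite for every $m$; hence so is the image of $\rho^X_{km}$, as desired. The genuinely delicate point is the transfer of $B$-birationality across the (non-crepant) minimal model map $\phi$ carried out above; the deepest external input is the semi-ample case of \cite{fujino-gongyo}, whose proof in turn passes to the log canonical model and forces the scalars arising in the representation to be roots of unity.
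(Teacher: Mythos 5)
Your proposal is correct and follows essentially the same route as the paper: both fix a good minimal model $\psi\colon (X,\Delta)\dashrightarrow (X',\Delta')$, pass to a common resolution, and apply the negativity lemma twice (your $E_a=E_b$ is the paper's $E=F$) to show that conjugation by $\psi$ gives an injective homomorphism $\Bir(X,\Delta)\hookrightarrow\Bir(X',\Delta')$ intertwining $\rho_{km}$ with $\rho'_{km}$, after which finiteness follows from the semiample case of \cite[Theorem 1.1]{fujino-gongyo} with the same choice of $k$ making both $k(K_X+\Delta)$ and $k(K_{X'}+\Delta')$ Cartier. The only cosmetic difference is that the paper writes the comparison using $K_Y+\Delta_Y^{>0}$ while you work directly with $p^*(K_X+\Delta)=a^*L+E_a$; both versions rest on the same discrepancy inequality $a(P,X,\Delta)\leq a(P,X',\Delta')$ noted after Definition \ref{def2.3}.
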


We make five important remarks on Theorem \ref{thm1.1}. 

\begin{rem}\label{rem1.2}
In Theorem \ref{thm1.1}, 
$k$ is the smallest positive integer 
such that $k(K_X+\Delta)$ and $k(K_{X'}+\Delta')$ are both 
Cartier, where $(X', \Delta')$ is a good minimal model of $(X, \Delta)$. 
\end{rem}

\begin{rem}\label{rem1.3}
If $K_X+\Delta$ is semiample, that is, $(X, \Delta)$ itself is a 
good minimal model of $(X, \Delta)$, then Theorem \ref{thm1.1} 
is nothing but 
\cite[Theorem 1.1]{fujino-gongyo} (see also Remark \ref{rem1.2}). 
In this paper, we will show that we 
can reduce Theorem \ref{thm1.1} to 
\cite[Theorem 1.1]{fujino-gongyo}. 
We note that \cite[Theorem 1.1]{fujino-gongyo} 
plays a crucial role in the study of the abundance conjecture 
for semi-log canonical pairs. 
\end{rem}

\begin{rem}[Log canonical pairs of log general type]\label{rem1.4}
If $(X, \Delta)$ is a projective log canonical pair such that 
$K_X+\Delta$ is big, then $\Bir(X, \Delta)$ is already a finite group 
(see \cite[Corollary 3.13]{fujino-gongyo}). 
Therefore, Theorem \ref{thm1.1} is 
obvious when $K_X+\Delta$ is big. 
\end{rem}

\begin{rem}[Kawamata log terminal pairs]\label{rem1.5}
We have already established some better results for kawamata 
log terminal pairs. 
For the details, see \cite[\S 3.1]{fujino-gongyo}, 
where we do not need the minimal model program. 
Hence, in this paper, 
we are mainly interested in a log canonical pair $(X, \Delta)$ 
which is not kawamata log terminal. 
For smooth varieties, see also \cite[\S 14]{ueno}.   
\end{rem}

\begin{rem}\label{rem1.6} 
In a recent preprint \cite{fujino-finiteness}, 
the first author established the finiteness of relative log pluricanonical 
representations in a suitable complex analytic setting. 
For the details, see \cite{fujino-finiteness}. 
\end{rem}

Similarly to Theorem \ref{thm1.1}, we can prove the following theorem. 

\begin{thm}\label{thm1.7}
Let $V$ be a smooth quasi-projective variety and let $X$ be a smooth 
projective completion of $V$ such that 
$\Delta:=X\setminus V$ is a simple 
normal crossing divisor on $X$. 
Assume that $(X, \Delta)$ has a good minimal model. 
Then there exists a positive integer $k$ such that 
the image of 
\begin{equation*}
\rho_{km}\colon \PBir(V)\to \Aut_{\mathbb C} 
H^0(X, \mathcal O_X(km(K_X+\Delta)))
\end{equation*}
is a finite group for every positive integer $m$, 
where 
\begin{equation*}
\PBir(V):=\{\sigma\mid \text{$\sigma\colon V\dashrightarrow V$ is 
proper birational}\}. 
\end{equation*}
\end{thm}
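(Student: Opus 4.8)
The plan is to reduce Theorem~\ref{thm1.7} to Theorem~\ref{thm1.1}, exactly as the authors promise in the sentence ``Similarly to Theorem~\ref{thm1.1}, we can prove the following theorem.'' The key observation is that the log pluricanonical representation $\rho_{km}$ of $\PBir(V)$ factors through the $B$-birational automorphism group $\Bir(X,\Delta)$ of the log canonical pair $(X,\Delta)$, so that finiteness of the image in Theorem~\ref{thm1.1} immediately yields finiteness here. Concretely, first I would record that the boundary $\Delta=X\setminus V$ is a reduced simple normal crossing divisor, so $(X,\Delta)$ is a projective log canonical pair with $K_X+\Delta$ being $\mathbb{Q}$-Cartier (in fact Cartier, since $X$ is smooth and $\Delta$ is reduced). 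Thus the hypotheses of Theorem~\ref{thm1.1} are met once we assume, as in the statement, that $(X,\Delta)$ has a good minimal model.

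The heart of the argument is the identification of $\PBir(V)$ with a subgroup of $\Bir(X,\Delta)$. The plan is to show that every proper birational self-map $\sigma\colon V\dashrightarrow V$ extends to a $B$-birational map $\bar\sigma\colon(X,\Delta)\dashrightarrow(X,\Delta)$, and conversely that $B$-birational self-maps of $(X,\Delta)$ restrict to proper birational self-maps of $V=X\setminus\Delta$. The forward direction is the geometric core: given a proper birational $\sigma$ on $V$, I would resolve its graph, take a common smooth model $W$ dominating $X$ via two morphisms $p,q\colon W\to X$, and check that properness of $\sigma$ over $V$ forces $p$ and $q$ to pull back the complement divisor compatibly, i.e.\ $p^{*}(K_X+\Delta)=q^{*}(K_X+\Delta)$ after accounting for the log structure. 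This last equality is precisely the $B$-birationality condition, and it is what upgrades a mere birational map into one that respects the discrepancies and hence induces a well-defined action on $H^0(X,\mathcal{O}_X(km(K_X+\Delta)))$.

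Once this group-theoretic identification $\PBir(V)\hookrightarrow\Bir(X,\Delta)$ is in place, the remaining steps are formal. I would choose $k$ to be the smallest positive integer making $k(K_X+\Delta)$ and $k(K_{X'}+\Delta')$ both Cartier, where $(X',\Delta')$ is the good minimal model, in accordance with Remark~\ref{rem1.2}. Since $X$ is smooth and $\Delta$ is a reduced simple normal crossing divisor, $K_X+\Delta$ is already Cartier, which simplifies the determination of $k$. Then the representation $\rho_{km}$ of $\PBir(V)$ is the restriction of the representation of $\Bir(X,\Delta)$ on the same space $H^0(X,\mathcal{O}_X(km(K_X+\Delta)))$; the image of a subgroup under a homomorphism into a group whose relevant image is finite is itself finite, so Theorem~\ref{thm1.1} gives the conclusion for every positive integer $m$ with the same $k$.

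The main obstacle I anticipate is the verification that proper birational self-maps of the open variety $V$ extend to genuinely $B$-birational self-maps of the compactified pair $(X,\Delta)$, and in particular that properness (rather than mere birationality) is exactly the right hypothesis to guarantee that no discrepancy is lost across the boundary. One must argue carefully that a proper birational map on $V$ cannot contract or create divisorial components along $\Delta$ in a way that would violate the log crepant condition; this is where the simple normal crossing hypothesis on $\Delta$ and the properness of $\sigma$ must be used in tandem. Everything after this identification is a direct transcription of the proof of Theorem~\ref{thm1.1}.
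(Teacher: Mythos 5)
There is a genuine gap, and it sits exactly where you anticipated trouble: the claimed identification $\PBir(V)\hookrightarrow \Bir(X,\Delta)$ is false in general, and properness of $\sigma$ is \emph{not} enough to guarantee $p^*(K_X+\Delta)=q^*(K_X+\Delta)$ on a common resolution. What properness actually gives (this is Iitaka's Lemma 11.2, which the paper invokes) is only the equality of reduced boundaries $\Delta_Y=p^{-1}(\Delta)=q^{-1}(\Delta)$ on a suitable smooth compactification $Y$ of a resolution of the graph; from this one gets $K_Y+\Delta_Y=p^*(K_X+\Delta)+E_p=q^*(K_X+\Delta)+E_q$ with $E_p,E_q$ effective and exceptional, but to conclude $E_p=E_q$ one must apply the negativity lemma, and that step requires some nefness of $K_X+\Delta$ pulled back to $Y$ --- which is simply not available, since $X$ is an \emph{arbitrary} smooth compactification and the good minimal model hypothesis says nothing about nefness of $K_X+\Delta$ on $X$ itself. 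A concrete illustration that properness alone cannot force $B$-birationality: take $V=\mathbb A^2$, $X=\mathbb P^2$, $\Delta=L$ a line, and $\sigma(x,y)=(x,y+x^2)\in\Aut(\mathbb A^2)\subseteq \PBir(V)$. The extension of $\sigma$ to $\mathbb P^2$ contracts $L$ to the point $P=[0:1:0]\in L$; the strict transform of $L$ on a resolution is an lc place (log discrepancy $0$) of the source pair, while every divisor over the point $P$ has log discrepancy at least $1$ with respect to $(\mathbb P^2,L)$, so the crepant condition $\alpha^*(K_X+\Delta)=\beta^*(K_X+\Delta)$ fails and $\sigma\notin\Bir(\mathbb P^2,L)$. (This particular pair has no good minimal model, so it is not a counterexample to the theorem, but it kills the purely formal embedding your proof rests on.)

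The paper's proof circumvents this precisely by \emph{not} trying to put $\sigma$ into $\Bir(X,\Delta)$. Instead, it conjugates by the good minimal model map $\psi\colon (X,\Delta)\dashrightarrow (X',\Delta')$ and shows $\psi\circ\sigma\circ\psi^{-1}\in\Bir(X',\Delta')$: writing $K_Y+\Delta_Y=\alpha'^*(K_{X'}+\Delta')+E=\beta'^*(K_{X'}+\Delta')+F$, the difference $E-F$ is relatively nef over $X'$ because $K_{X'}+\Delta'$ is \emph{semiample} on the minimal model, so the negativity lemma applies in both directions and yields $E=F$. Finiteness then comes from \cite[Theorem 1.1]{fujino-gongyo} applied to $(X',\Delta')$, transported back through the natural isomorphism $H^0(X,\mathcal O_X(km(K_X+\Delta)))\simeq H^0(X',\mathcal O_{X'}(km(K_{X'}+\Delta')))$ that the minimal model property provides. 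So the correct repair of your argument is not to embed $\PBir(V)$ into $\Bir(X,\Delta)$, but into $\Bir(X',\Delta')$ via conjugation by $\psi$; everything after that point in your write-up (restriction of a representation with finite image, choice of $k$) then goes through as you describe. Note also that the reduction is to \cite[Theorem 1.1]{fujino-gongyo} (the semiample case), not to Theorem \ref{thm1.1} of this paper, though the latter would work too once the conjugation step is done.
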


Note that Theorem \ref{thm1.7} implies Theorem \ref{thm1.8}. 

\begin{thm}[Affine varieties]\label{thm1.8}
Let $V$ be a smooth affine variety and let $X$ be a smooth 
projective completion of $V$ such that 
$\Delta:=X\setminus V$ is a simple 
normal crossing divisor on $X$. 
Then there exists a positive integer $k$ such that 
the image of 
\begin{equation*}
\rho_{km}\colon \Aut(V)\to \Aut_{\mathbb C} 
H^0(X, \mathcal O_X(km(K_X+\Delta)))
\end{equation*}
is a finite group for every positive integer $m$, 
where 
\begin{equation*}
\Aut(V):=\{\sigma\mid \text{$\sigma\colon V\to V$ is an isomorphism}\}. 
\end{equation*} 
\end{thm}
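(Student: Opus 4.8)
The plan is to deduce Theorem~\ref{thm1.8} from Theorem~\ref{thm1.7} by showing that an automorphism of an affine variety is automatically proper birational, so that $\Aut(V)$ embeds into $\PBir(V)$ compatibly with the representations $\rho_{km}$.

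First I would verify the inclusion $\Aut(V)\subseteq \PBir(V)$. Let $\sigma\colon V\to V$ be an isomorphism. It is certainly birational, so the only point to check is that $\sigma$ is proper as a birational map $V\dashrightarrow V$. Since $V$ is affine and $\sigma$ is an everywhere-defined isomorphism (hence a finite, in particular proper, morphism $V\to V$), the map $\sigma$ fits the definition of a proper birational self-map of $V$. More precisely, an isomorphism has no indeterminacy and its graph $\Gamma_\sigma\subseteq V\times V$ projects isomorphically to each factor, so both projections are proper; this is exactly what is required for $\sigma$ to lie in $\PBir(V)$. Thus we obtain a group homomorphism $\Aut(V)\hookrightarrow \PBir(V)$.

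Next I would confirm that the hypotheses of Theorem~\ref{thm1.7} are met in the affine setting, namely that $(X,\Delta)$ has a good minimal model. Here the key observation is that for a smooth affine variety $V$ with smooth projective completion $X$ and simple normal crossing boundary $\Delta=X\setminus V$, the pair $(X,\Delta)$ is of log general type: since $V$ is affine, $K_X+\Delta$ is big (indeed $\Delta$ supports a big divisor because the complement of an affine open is an ample-supported, or at least big, divisor). By Remark~\ref{rem1.4} such a pair already has $\Bir(X,\Delta)$ finite, and in any event a log canonical pair with $K_X+\Delta$ big admits a good minimal model by the results on the minimal model program for pairs of log general type. Hence Theorem~\ref{thm1.7} applies and produces the desired positive integer $k$.

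Finally, I would observe that the representation $\rho_{km}$ in Theorem~\ref{thm1.8} is simply the restriction of the representation $\rho_{km}$ in Theorem~\ref{thm1.7} along the inclusion $\Aut(V)\hookrightarrow\PBir(V)$ constructed above. Since the image of $\rho_{km}$ on all of $\PBir(V)$ is finite by Theorem~\ref{thm1.7}, its image on the subgroup $\Aut(V)$ is a fortiori finite, and the same integer $k$ works for every $m$. I expect the only genuinely delicate point to be the claim that an isomorphism of the affine variety $V$ is proper \emph{as a birational self-map}, i.e.\ that $\Aut(V)\subseteq\PBir(V)$; once the definitions of properness for birational maps are pinned down this is routine, but it is where the affineness of $V$ (ensuring $K_X+\Delta$ is big and the map extends without indeterminacy along the boundary) is genuinely used.
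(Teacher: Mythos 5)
Your reduction of Theorem \ref{thm1.8} to Theorem \ref{thm1.7} via the inclusion $\Aut(V)\subseteq\PBir(V)$ is sound, and in fact slightly more economical than the paper at that point: the inclusion is immediate (the graph of an isomorphism is closed in $V\times V$ and projects isomorphically, hence properly, to both factors — no affineness needed), and restricting a representation with finite image to a subgroup keeps the image finite. The paper instead records the stronger equality $\PBir(V)=\Aut(V)$ via Zariski's main theorem, which your direction of the argument does not require. However, there is a genuine gap in your verification of the hypothesis of Theorem \ref{thm1.7}: the claim that affineness of $V$ forces $K_X+\Delta$ to be big is simply false. Take $V=\mathbb A^n$, $X=\mathbb P^n$, and $\Delta$ the hyperplane at infinity: then $K_X+\Delta\sim -nH$ is anti-ample, so $\kappa(X, K_X+\Delta)=-\infty$; or take $V=(\mathbb C^*)^n$, $X=(\mathbb P^1)^n$, where $K_X+\Delta\sim 0$ and $\kappa=0$. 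What affineness actually gives is that the \emph{boundary} $\Delta$ supports an effective big (indeed semiample) divisor; the log canonical divisor itself can have any Kodaira dimension from $-\infty$ to $\dim X$. Consequently your appeals to Remark \ref{rem1.4} and to the minimal model program for pairs of log general type do not establish that $(X,\Delta)$ has a good minimal model, and your argument proves Theorem \ref{thm1.8} only in the special case where $K_X+\Delta$ happens to be big.

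The paper closes exactly this gap differently: since $V$ is affine, the boundary supports a big semiample divisor, and by the argument in the proof of \cite[Proposition 4.1]{fujino-sub} this yields a good minimal model of $(X,\Delta)$ whenever $\kappa(X, K_X+\Delta)\geq 0$. The remaining case $\kappa(X, K_X+\Delta)=-\infty$ is trivial, since then $H^0(X, \mathcal O_X(km(K_X+\Delta)))=0$ for every positive integer $m$ and the image of $\rho_{km}$ is the trivial group. To repair your proposal you would need to replace the bigness claim with this (or an equivalent) existence argument for good minimal models, together with the case split on $\kappa$. The incidental remark at the end of your proposal — that an automorphism of $V$ ``extends without indeterminacy along the boundary'' — is also inaccurate (an automorphism of $V$ need not extend to a morphism $X\to X$) but is not load-bearing, since Definition \ref{def2.2} handles the extension through a resolution of the graph.
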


To the best knowledge of the authors, Theorem \ref{thm1.8} is nontrivial and new. 

\begin{ack}\label{b-ack}
The first author was partially 
supported by JSPS KAKENHI Grant Numbers 
JP20H00111, JP21H04994, JP21H04994, JP23K20787. 
The authors would like to thank Professors Hideo Kojima and Takashi Kishimoto for 
comments. 
\end{ack}

Throughout this paper, we will work over $\mathbb C$, the field of 
complex numbers. We will freely use the standard notation as in 
\cite{fujino-fundamental} and \cite{fujino-foundations}. 

\section{Preliminaries}

In this section, we recall some basic definitions and 
properties necessary for this paper. For 
the standard notation of the 
theory of minimal models, see, for example, \cite{fujino-fundamental}, 
\cite{fujino-foundations}, and so on. Let us 
start with the definition of $B$-birational maps, which 
was first introduced in \cite{fujino-abundance}. 

\begin{defn}[$B$-birational maps]
Let $(X, \Delta)$ be a projective log canonical pair. 
We say that a birational map $\sigma\colon X\dashrightarrow 
X$ is {\em{$B$-birational}} if there exists a commutative diagram 
\begin{equation*}
\xymatrix{
&Z\ar[dl]_-\alpha\ar[dr]^-\beta& \\ 
X \ar@{-->}[rr]_-\sigma&&X
}
\end{equation*} 
such that $Z$ is a normal projective variety, $\alpha$ and 
$\beta$ are projective birational morphisms, and 
\begin{equation*}
\alpha^*(K_X+\Delta)=\beta^*(K_X+\Delta)
\end{equation*} 
holds.  
We put 
\begin{equation*}
\Bir(X, \Delta):=\{\sigma\mid 
\text{$\sigma\colon (X, \Delta)\dashrightarrow 
(X, \Delta)$ is $B$-birational}\}. 
\end{equation*}
Then $\Bir(X, \Delta)$ has a natural group structure. 
We take a positive integer $m$ such that $m(K_X+\Delta)$ is 
Cartier. 
Then it is easy to see that we have a group homomorphism 
\begin{equation*}
\rho_m\colon \Bir(X, \Delta)\to \Aut_{\mathbb C} H^0(X, \mathcal 
O_X(m(K_X+\Delta))). 
\end{equation*}
We call it the {\em{$B$-pluricanonical 
representation}} or {\em{log pluricanonical representation}} 
for the pair $(X, \Delta)$.  
\end{defn}

For the details of $\Bir(X, \Delta)$, see \cite{fujino-gongyo}. 
Let us recall the notion of proper birational maps 
(see, for example, \cite{iitaka}). 

\begin{defn}[Proper birational maps]\label{def2.2}
Let $V$ be a quasi-projective variety. 
We say that a birational map $\sigma\colon V\dashrightarrow 
V$ is {\em{proper birational}} if $p_1$ and $p_2$ are projective, where 
$\Gamma$ is the graph of $\sigma\colon V\dashrightarrow V$ and 
$p_1$ and $p_2$ are projections from $\Gamma$ to $V$ as in the following 
commutative diagram (see \cite[Proposition 2.17 (i)]{iitaka}). 
\begin{equation*}
\xymatrix{
&\Gamma\ar[dr]^-{p_2}\ar[dl]_-{p_1}& \\ 
V\ar@{-->}_-\sigma[rr]&&V
}
\end{equation*} 
We put  
\begin{equation*}
\PBir(V):=\{\sigma \mid\text{$\sigma\colon V\dashrightarrow 
V$ is proper birational}\}. 
\end{equation*} 
Then it is easy to see that $\PBir(V)$ has a natural group structure 
(see \cite[Proposition 2.17 (iii)]{iitaka}). 
We note that if $V$ is affine and normal then $\sigma$ is 
an isomorphism for every $\sigma\in \PBir(V)$ by 
Zariski's main theorem (see \cite[Theorem 2.19 and Corollary]{iitaka}). 
We further assume that $V$ is smooth and $X$ is a smooth 
projective completion of $V$ such that $\Delta:=X\setminus V$ 
is a simple normal crossing divisor on $X$. 
Let $\Gamma'\to \Gamma$ be a projective resolution of singularities and 
let $Y$ be a projective completion of $\Gamma'$ such that 
$\Delta_Y:=Y\setminus \Gamma'$ is a 
simple normal crossing divisor on $Y$. 
Then we have the following commutative diagram. 
\begin{equation*}
\xymatrix{
&\Gamma'\ar[d]\ar[ddr]^-{p'_2}\ar[ddl]_-{p'_1}& \\ 
&\Gamma\ar[dr]_-{p_2}\ar[dl]^-{p_1}& \\ 
V\ar@{-->}_-\sigma[rr]&&V
}
\end{equation*} 
Let $m$ be any positive integer. 
In this case, for every $\sigma\in \PBir(V)$, 
we can define $\sigma^* \in \Aut_{\mathbb C}H^0(X, 
\mathcal O_X(m(K_X+\Delta)))$ as follows:  
\begin{equation*}
\begin{split}
\sigma^*:=(\overline {p}'^*_1)^{-1}\circ 
\overline{p}'^*_2\colon H^0(X, 
\mathcal O_X(m(K_X+\Delta)))&\to 
H^0(Y, 
\mathcal O_Y(m(K_Y+\Delta_Y)))\\&\to 
H^0(X, 
\mathcal O_X(m(K_X+\Delta))),  
\end{split} 
\end{equation*}
where $\overline {p}'_i\colon Y\to X$ is the extension of $p'_i\colon 
\Gamma' \to V$ for $i=1, 2$.  
We can easily see that it is independent of the choice of $(Y, \Delta_Y)$ 
(see \cite[Theorem 11.1]{iitaka}). 
Then we can consider the following group homomorphism 
\begin{equation*}
\rho_m\colon \PBir(V)\to \Aut_{\mathbb C} H^0(X, 
\mathcal O_X(m(K_X+\Delta)))
\end{equation*} 
by setting $\rho_m(\sigma):=\sigma^*$ for every positive integer $m$. 
\end{defn}

For the sake of completeness, we recall the 
definition of good minimal models in the sense of Birkar--Shokurov. 

\begin{defn}[Good minimal models]\label{def2.3}
Let $(X, \Delta)$ be a projective log canonical pair. 
Let $\psi\colon X\dashrightarrow X'$ be a birational map 
and let $E$ be the reduced $\psi^{-1}$-exceptional divisor on $X'$, 
that is, $E=\sum _jE_j$ where 
$E_j$ are the $\psi^{-1}$-exceptional prime divisors on $X'$. 
Then $(X', \Delta')$ is called a {\em{good minimal model}} 
of $(X, \Delta)$ (in the sense of Birkar--Shokurov) 
if 
\begin{itemize}
\item[(1)] $(X', \Delta')$ is a projective $\mathbb Q$-factorial 
divisorial log terminal pair, where $\Delta':=\psi_*\Delta+E$, 
\item[(2)] $K_{X'}+\Delta'$ is semiample, and 
\item[(3)] $a(D, X, \Delta)<a(D, X', \Delta')$ holds for 
every $\psi$-exceptional prime divisor $D$ on $X$. 
\end{itemize}  
\end{defn}

In Definition \ref{def2.3}, we note that we can 
prove $a(P, X, \Delta)\leq a(P, X', \Delta')$ for every 
prime divisor $P$ over $X$ by the negativity lemma. 

\section{Proofs}

In this section, we prove Theorems \ref{thm1.1}, 
\ref{thm1.7}, and \ref{thm1.8}. 

\begin{proof}[Proof of Theorem \ref{thm1.1}]
Let $\psi\colon (X, \Delta)\dashrightarrow (X', \Delta')$ be 
a good minimal model. Then we can construct the following commutative 
diagram
\begin{equation*}
\xymatrix{
&&\ar[dll]_-{\alpha'}\ar[dl]^-\alpha(Y, \Delta_Y)\ar[dr]_-\beta
\ar[drr]^{\beta'}&& \\ 
(X', \Delta') & \ar@{-->}[l]^-\psi(X, \Delta) \ar@{-->}[rr]_-\sigma&& (X, \Delta)
\ar@{-->}[r]_\psi& (X', \Delta')
}
\end{equation*}
where $Y$ is a smooth projective variety with 
\begin{equation*}
\alpha^*(K_X+\Delta)=:K_Y+\Delta_Y:=\beta^*(K_X+\Delta)
\end{equation*} 
such that the support of $\Delta_Y$ is a simple normal crossing 
divisor on $Y$. 
Then we can write 
\begin{equation*}
K_Y+\Delta^{>0}_Y=\alpha'^*(K_{X'}+\Delta')+E
\end{equation*} and 
\begin{equation*}
K_Y+\Delta^{>0}_Y=\beta'^*(K_{X'}+\Delta')+F
\end{equation*} 
such that $E$ and $F$ are both effective 
with $\alpha'_*E=0$ and $\beta'_*F=0$. 
We note that $E-F$ is $\alpha'$-nef and $\alpha'_*(F-E)\geq 0$. 
This implies $F\geq E$ by the well-known negativity lemma. Similarly, 
we can prove that $E\geq F$ holds. 
Thus we have $E=F$. 
This means that $\psi\circ \sigma \circ \psi^{-1}\in \Bir(X', \Delta')$. 
We take the smallest positive integer $k$ such that 
$k(K_X+\Delta)$ and $k(K_{X'}+\Delta')$ are both Cartier. 
By \cite[Theorem 1.1]{fujino-gongyo}, 
we know that 
the image of 
\begin{equation*}
\rho'_{km}\colon \Bir(X', \Delta')\to \Aut _{\mathbb C} H^0(X', 
\mathcal O_{X'}(km(K_{X'}+\Delta'))) 
\end{equation*} 
is a finite group for every positive integer $m$. 
We have the following commutative diagram: 
\begin{equation*}
\xymatrix{ 
\Bir(X, \Delta)\ar@{^{(}->}[d]_-{\pi}\ar[r]^-{\rho_{km}}& \Aut _{\mathbb C} H^0(X, 
\mathcal O_X(km(K_X+\Delta)))\ar[d]^{\simeq}_-{\Pi}\\
\Bir(X', \Delta')\ar[r]_-{\rho'_{km}}&\Aut _{\mathbb C} H^0(X', 
\mathcal O_{X'}(km(K_{X'}+\Delta'))), 
}
\end{equation*}
where $\pi(\sigma)=\psi\circ \sigma\circ \psi^{-1}$ for 
$\sigma\in \Bir(X, \Delta)$ and $\Pi(g)=(\psi^{-1})^*\circ 
g\circ \psi^*$ for $g\in \Aut_{\mathbb C}H^0(X, \mathcal O_X(km(K_X+\Delta)))$. 
Hence we obtain the desired finiteness. 
We finish the proof of Theorem \ref{thm1.1}. 
\end{proof}

The proof of Theorem \ref{thm1.7} is essentially the 
same as that of Theorem \ref{thm1.1}. 

\begin{proof}[Proof of Theorem \ref{thm1.7}]
Let $\psi\colon (X, \Delta)\to (X', \Delta')$ be a good minimal model. 
As in Definition \ref{def2.2}, we construct $(Y, \Delta_Y)$ and 
obtain the following commutative diagram:   
\begin{equation*}
\xymatrix{
&&\ar[dll]_-{\alpha'}\ar[dl]^-\alpha(Y, \Delta_Y)\ar[dr]_-\beta
\ar[drr]^{\beta'}&& \\ 
(X', \Delta') & \ar@{-->}[l]^-\psi(X, \Delta) \ar@{-->}[rr]_-\sigma&& (X, \Delta)
\ar@{-->}[r]_\psi& (X', \Delta'), 
}
\end{equation*}
where $\alpha$ (resp.~$\beta$) is the extension of $p'_1\colon \Gamma'\to V$ 
(resp.~$p'_2\colon \Gamma'\to V$). 
By construction, we have 
$\Delta_Y=\alpha^{-1}(\Delta)=\beta^{-1}(\Delta)$ since 
$\sigma\colon V\dashrightarrow V$ is proper birational (see 
\cite[Lemma 11.2]{iitaka}). 
As in the proof of Theorem \ref{thm1.1}, we write 
\begin{equation*}
K_Y+\Delta_Y=\alpha'^*(K_{X'}+\Delta')+E
\end{equation*} 
and 
\begin{equation*}
K_Y+\Delta_Y=\beta'^*(K_{X'}+\Delta')+F 
\end{equation*} 
with $E\geq 0$, $F\geq 0$, $\alpha'_*E=0$, and $\beta'_*F=0$. 
Then, by the negativity lemma, we can prove that 
$E=F$ holds, that is, $\psi\circ \sigma\circ \psi^{-1}\in 
\Bir(X', \Delta')$. 
We take the smallest positive integer $k$ such that 
$k(K_{X'}+\Delta')$ is Cartier. Then, by \cite[Theorem 1.1]{fujino-gongyo}. 
the image of 
\begin{equation*}
\rho'_{km}\colon \Bir(X', \Delta')\to 
\Aut_{\mathbb C} H^0(X', \mathcal O_{X'}(km(K_{X'}+\Delta')))
\end{equation*} 
is a finite group for every positive integer $m$. 
Hence, by the same argument as in the proof 
of Theorem \ref{thm1.1}, we obtain the desired finiteness. We finish 
the proof of Theorem \ref{thm1.7}. 
\end{proof}

Finally, we prove Theorem \ref{thm1.8}, which is an easy application of Theorem 
\ref{thm1.7}. 

\begin{proof}[Proof of Theorem \ref{thm1.8}]
As we mentioned before, $\PBir(V)=\Aut(V)$ holds by 
Zariski's main theorem 
since $V$ is a smooth affine variety. 
Moreover, it is well known that $(X, \Delta)$ has a good minimal 
model when $\kappa (X, K_X+\Delta)\geq 0$ since $V$ is 
affine (see the proof of 
\cite[Proposition 4.1]{fujino-sub}). 
Thus, the desired statement follows from Theorem \ref{thm1.7}. 
We finish the proof of Theorem \ref{thm1.8}. 
\end{proof}


\end{document}